\numberwithin{equation}{section}
\numberwithin{figure}{section}
  \theoremstyle{plain}
  \newtheorem*{thm*}{\protect\theoremname}
\theoremstyle{plain}
\newtheorem{thm}{\protect\theoremname}
  \theoremstyle{plain}
  \newtheorem{lem}[thm]{\protect\lemmaname}
  \theoremstyle{plain}
  \newtheorem{cor}[thm]{\protect\corollaryname}
\theoremstyle{remark}
\newtheorem{rem}[thm]{Remark}
  \providecommand{\corollaryname}{Corollary}
  \providecommand{\lemmaname}{Lemma}
  \providecommand{\theoremname}{Theorem}
\providecommand{\theoremname}{Theorem}
\begin{document}
\global\long\def\l{\lambda}
\global\long\def\ep{\epsilon}

\title{A Blow-Up Result for Dyadic Models of the Euler Equations}

\author{In-Jee Jeong and Dong Li}


\email{ijeong@math.princeton.edu, dli@math.ubc.ca}
\begin{abstract}
We partially answer a question raised by Kiselev and Zlatos in
\cite{MR2180809}; in the generalized dyadic model of the Euler
equation, a blow-up of $H^{1/3+\delta}$-norm occurs. We recover a
few previous blow-up results for various related dyadic models as
corollaries.
\end{abstract}
\maketitle

\section{Introduction}

In this paper, we consider the following infinite system of ordinary
differential equations (ODEs):
\begin{align}
\frac{da_{j}(t)}{dt}&
=\alpha\big(\lambda^{j}a_{j-1}^{2}(t)-\l^{j+1}a_{j}(t)a_{j+1}(t)\big)
\notag\\
& \qquad +\beta\big(\l^{j}a_{j-1}(t)a_{j}(t)-\l^{j+1}a_{j+1}^{2}(t)\big),\label{eq:KPO}
\end{align}
for $j\geq0$ and with the boundary condition $a_{-1}(t)\equiv0$.
The coefficients $\alpha$ and $\beta$ are usually taken to be nonnegative constants.
We will assume $\l=2$ throughout, but our results hold for arbitrary
$\l>1$ with proper adjustments of the parameters. The special case
$\alpha=1,\beta=0$ is often called the KP equations, which have appeared
in the literature almost simultaneously in two papers \cite{MR2038114,MR2095627}.
The opposite extreme $\alpha=0,\beta=1$  first appeared in Obukhov's
work \cite{Obukhov1971} and was suggested as an alternative to the
KP equations in \cite{MR2231615}. Hence (\ref{eq:KPO}) can be viewed
as a linear combination of these two models.

These types of infinite system of ODEs are called dyadic models of
the Euler equations. For a heuristic derivation of the KP equations
from the Euler equations, one can see \cite{MR2095627}
for an argument based on the wavelet expansion of a scalar function over dyadic
cubes. Alternatively, consider the Euler equations in $\mathbb{R}^{n}$ with periodic
boundary conditions and rewrite the equations in terms of the Fourier
coefficients of the velocity vector field. Then one obtains an infinite
system of ODEs for the evolution of Fourier coefficients which share
several structural similarities with (\ref{eq:KPO}). We will return
to this point after Lemma \ref{lem:KPO}, from which equations (\ref{eq:KPO})
appear naturally from some constitutive relations.

To state blow-up and regularity results for dyadic models, let us
first define analogues of the Sobolev norms in the space of sequences.
The $H^{s}$-norm of a solution $a=(a_{0},a_{1},...)$ at time $t$
is defined by the formula
\[
||a(t)||_{s}^{2}:=\sum_{j=0}^{\infty}2^{2sj}a_{j}^{2}(t).
\]
In particular, we define the energy $E(t)$ as the square of $H^{0}$-norm (or the usual $l^2$-norm):
\[
E(t):=\sum_{j=0}^{\infty}a_{j}^{2}(t).
\]
Regarding the KP equations, the following blow-up result have been
proved several times (we have listed the references in a more or
less chronological order):
\begin{thm*}[\cite{MR2038114,MR2095627,MR2231615,MR2180809,MR2746670}]
 For every nonzero initial data, the $H^{s}$-norm of any solution\footnote{The meaning of solution
 here requires some clarification. It is known (cf. Proposition 2.1 of \cite{MR2180809})
 that local wellposedness holds in $C_t^0 H^s$ with $s\ge 1$. Alternatively one can work with  finite energy
 Leray-Hopf type solutions (cf. \cite{MR3057168,MR2415066}) for which uniqueness is a subtle issue.}
becomes infinite in finite time for all $s>s_{cr}:=1/3$.
\end{thm*}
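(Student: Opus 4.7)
The critical index $s_{cr}=1/3$ matches the Kolmogorov prediction for \eqref{eq:KPO} with $\alpha=1$, $\beta=0$: a constant-flux cascade forces $a_j\sim\Pi^{1/3}2^{-j/3}$, so that $\|a\|_s^2\sim\sum_j 2^{2(s-1/3)j}$ diverges precisely when $s\geq 1/3$. My plan is therefore to show that such a cascade actually develops in finite time for every nonzero initial datum, from which $H^s$-blow-up for each $s>1/3$ follows at once.

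\textbf{Step 1: reduction to nonnegative data.} I would first reduce to the case $a_j(0)\geq 0$. The nonnegative cone is invariant under the KP flow: whenever some coordinate $a_{j_0}$ vanishes while the others remain nonnegative, \eqref{eq:KPO} gives $\dot a_{j_0}=2^{j_0}a_{j_0-1}^2\geq 0$. For sign-changing data, I would compare with the positive solution $b_j$ starting from $|a_j(0)|$: a Gronwall-type estimate on $b_j\pm a_j$ yields $|a_j(t)|\leq b_j(t)$, so blow-up of $\|b\|_s$ implies blow-up of $\|a\|_s$.

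\textbf{Step 2: flux identity under the $H^s$ hypothesis.} For a positive solution, \eqref{eq:KPO} telescopes to
\[
\frac{d}{dt}\sum_{j\leq N}a_j^2 \;=\; -2\cdot 2^{N+1}\,a_N^2\,a_{N+1},
\]
so energy is transferred monotonically to high modes with flux $\Pi_N=2^{N+1}a_N^2a_{N+1}$, and the total energy $E(t)$ is formally conserved. Assuming for contradiction that $\sup_{[0,T]}\|a\|_s\leq M$ with $s>1/3$, the pointwise estimate $a_j\leq M\,2^{-sj}$ gives $\Pi_N\leq M^3\,2^{(1-3s)N+1-s}\to 0$ as $N\to\infty$, so the identity $E(t)\equiv E(0)$ holds exactly on $[0,T]$ and essentially no energy can leave any tail $\{j>N\}$.

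\textbf{Step 3 and main obstacle: the cascade must outrun the bound.} The remaining, and hardest, task is to show that the cascade nevertheless reaches arbitrarily high modes in finite time. I would track a Lyapunov-type quantity --- for instance a weighted tail $L_N(t)=\sum_{j\geq N}r^{j-N}a_j(t)$ with $r\in(0,1)$ --- and use the positivity from Step~1 to convert the sign-indefinite cubic right-hand side of $\dot L_N$ into a coercive lower bound of the form $\dot L_N\geq \delta\cdot 2^N a_N\,L_N$. Such a bound forces $L_N$, and in particular $a_N$, to pick up a definite fraction of $E(0)$ at some $t^*<T$, contradicting $a_N\leq M\,2^{-sN}$ once $N$ is large. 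The delicate point is calibrating the weights $r^k$ so that the argument captures exactly $s_{cr}=1/3$ rather than a strictly larger threshold: weights that decay too slowly fall outside the control of the $H^s$ hypothesis, while weights that decay too fast destroy strict monotonicity of $\dot L_N$. Propagating a quantitative lower bound down from the lowest excited mode, so that positivity is effective rather than merely nominal, is where I expect most of the technical work to concentrate.
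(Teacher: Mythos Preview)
Your Step~1 contains a genuine gap: the implication runs the wrong way. If $|a_j(t)|\leq b_j(t)$ componentwise, then $\|a(t)\|_s\leq\|b(t)\|_s$, so blow-up of the \emph{majorant} $b$ says nothing about $a$---the smaller solution may perfectly well remain bounded. (The comparison $|a_j|\leq b_j$ is itself not obvious for the KP system and the one-line Gronwall sketch does not establish it, but even granting it the conclusion fails.) The reduction from arbitrary nonzero data to nonnegative data in the cited references proceeds differently, via forward propagation of positivity in the index $j$: once some $a_{j_0}\neq 0$, the source term $2^{j_0+1}a_{j_0}^2$ drives $a_{j_0+1}$ strictly positive, and positivity then persists and cascades upward, so that after a short time one is effectively in the nonnegative setting on a tail.

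As for Steps~2--3, note first that the paper does not itself prove this theorem---it is quoted from the literature---but its own method (the proof of Theorem~\ref{thm:KPOD}, specialized to $\beta=0$, $\nu=0$ as in Corollary~\ref{cor:KPO}) gives a much shorter route for nonnegative data than your tail-Lyapunov scheme. Setting $b_j=2^j a_j$ and $S(t)=\sum_{j\geq 0} b_j\,w^{-j}$ with $2^{2-2s}\leq w<2^{4/3}$, a single application of Cauchy--Schwarz to the summed equation yields
\[
\frac{d}{dt}S \;\geq\; (4w^{-1}-w^{1/2})\,A(t)\;\geq\;(4w^{-1}-w^{1/2})(1-w^{-1})\,S^{2},
\]
where $A(t)=\sum_j b_j^2 w^{-j}\leq\|a\|_s^2$. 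This Riccati inequality forces $S\to\infty$ in finite time once $S(0)>0$, contradicting local integrability of $\|a\|_s^2$. The window $2^{2-2s}\leq w<2^{4/3}$ is nonempty exactly when $s>1/3$, so the sharp threshold falls out automatically; the delicate weight calibration you anticipate in Step~3 is not needed.
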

Now for the Obukhov equations, there is the following regularity result:
\begin{thm*}[\cite{MR2180809}]
 If the initial data have finite $H^{s}$-norm for some $s>1$, then
the corresponding solution exists globally and has finite $H^{s}$-norm for all $t\geq0$.
\end{thm*}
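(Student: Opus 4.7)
The plan is a standard continuation argument: by the local well-posedness in $C^0_tH^s$ for $s\ge 1$ cited before the previous theorem, it suffices to rule out finite-time blow-up of $\|a(t)\|_s$ by producing an a priori bound on every bounded interval $[0,T]$. The starting point is the weighted energy identity. Differentiating $\|a(t)\|_s^2=\sum_{j\ge 0}2^{2sj}a_j^2(t)$ along the Obukhov flow (equation~(\ref{eq:KPO}) with $\alpha=0$, $\beta=1$) and shifting the summation index of the second nonlinear contribution (using the boundary condition $a_{-1}\equiv 0$) yields
\begin{equation*}
\frac{d}{dt}\|a(t)\|_s^2 \;=\; 2\bigl(1 - 2^{-2s}\bigr)\sum_{j\ge 0}2^{(2s+1)j}\,a_{j-1}(t)\,a_j^2(t).
\end{equation*}
The special case $s = 0$, in which the prefactor $(1 - 2^{-2s})$ vanishes, recovers conservation of the total energy $E(t) \equiv E(0)$.

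The main task, and the main obstacle, is to bound the cubic sum on the right in a way that precludes finite-time blow-up. A naive attempt exploits the Sobolev decay $|a_{j-1}|\le 2^{-s(j-1)}\|a\|_s$ together with the hypothesis $s>1$, which renders $j \mapsto 2^{(1-s)j+s}$ decreasing and therefore gives $\sup_{j\ge 1}2^j|a_{j-1}|\le 2^s\|a\|_s$; pulling this supremum out of the sum produces only the cubic bound $\frac{d}{dt}\|a\|_s^2\le C_s\|a\|_s^3$, which allows blow-up in finite time and is therefore insufficient. The refinement I would pursue combines this Sobolev decay at high frequencies with the uniform pointwise bound $|a_j|\le \sqrt{E(0)}$ furnished by energy conservation at low frequencies, splitting the sum at the crossover scale $j^*(t) \sim s^{-1}\log_2\bigl(\|a(t)\|_s/\sqrt{E(0)}\bigr)$ where the two bounds cross, and then exploiting the multiplicative structure of the Obukhov nonlinearity $a_{j-1}a_j^2$ to extract an additional factor of $|a_j|$ that can be absorbed into the $H^s$-norm (the KP nonlinearity $a_{j-1}^2a_j$ admits no such extraction, consistent with its known blow-up).

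The target of this refinement is a Gronwall-type inequality of the form $\frac{d}{dt}\|a(t)\|_s^2 \le F\bigl(\|a(t)\|_s,\,E(0)\bigr)$ whose solutions do not blow up in finite time; integration and the standard continuation criterion then extend the $H^s$-solution to all $t\ge 0$. The principal difficulty is thus isolated to quantifying precisely how much the gap $s>1$ (versus the scaling-critical exponent $1/3$) improves upon the naive cubic estimate once energy conservation is taken into account.
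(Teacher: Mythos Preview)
First, note that the paper does not supply its own proof of this statement: it is quoted as a result of Kiselev and Zlato\v{s} \cite{MR2180809}, so there is no in-paper argument to compare against. On the merits, though, your proposal has a genuine gap. Your weighted energy identity is correct, and you are right that the naive bound yields only $\frac{d}{dt}\|a\|_s^2\le C_s\|a\|_s^3$. The trouble is that the refinement you propose---splitting the sum at the crossover scale $j^*$ where $\sqrt{E(0)}\approx 2^{-sj^*}\|a\|_s$, using energy conservation below $j^*$ and $H^s$-decay above---does not escape a superlinear differential inequality. Carrying out the split and optimizing over $j^*$ yields, for $s>1$,
\[
\frac{d}{dt}\|a(t)\|_s^2 \;\le\; C_s\,E(0)^{\frac{s-1}{2s}}\,\|a(t)\|_s^{\,2+\frac{1}{s}},
\]
whose right-hand side is still a power of $\|a\|_s^2$ strictly greater than one and therefore still permits finite-time blow-up; the Gronwall-type conclusion you are after does not follow.

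The structural distinction you draw between the Obukhov flux term $a_{j-1}a_j^2$ and the KP flux term $a_{j-1}^2a_j$ is in fact invisible at the level of this energy estimate: the two cubic sums differ only by a shift of the summation index, and any interpolation between $H^0$ and $H^s$ gives the same power bound for both, so the ``additional factor of $|a_j|$'' you hope to extract is not really there. What genuinely distinguishes Obukhov from KP is the pointwise ODE structure: in $\dot a_j=2^j a_{j-1}a_j-2^{j+1}a_{j+1}^2$ the first term is \emph{linear} in $a_j$ (a transport-type coefficient) while the second has a definite sign (in particular, at $a_j=0$ one has $\dot a_j\le 0$, so each mode can cross zero at most once, and only downward). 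A proof of global regularity has to leverage this pointwise mechanism rather than rely solely on the $H^s$-energy balance, and your proposal does not invoke anything of the sort.
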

In \cite{MR2180809}, Kiselev and Zlatos raised the question of whether blow-up
can occur in the case $\alpha,\beta>0$ (or more generally $\operatorname{sgn}(\alpha)=
\operatorname{sgn}(\beta)$). Corollary \ref{cor:KPO} of this paper
answers this question affirmatively, at least in the case when $\beta$ is
small relative to $\alpha$. Our proof of blow-up is quite different
and seems to be simpler than the previous proofs for the KP equations
(\cite{MR2038114,MR2095627,MR2231615,MR2180809}). Roughly speaking, all the previous blow-up proofs
rest on the intuition that at least a fixed proportion
of the energy contained in the $j$th component must be transferred
to the higher components within a time scale of $\tau^{-j}$ for some
$\tau>0$. To achieve this, one has to make strong use of the ``positivity''
of the KP equations; that is, once we have $a_{j}(t_{0})\geq0$ for
some $j$ and $t_{0}$,  then $a_{j}(t)\geq0$ for all future $t>t_{0}$. This
positivity in turn implies that there is no ``backward'' transfer
of energy; to be more precise, if the initial data satisfy $a_{j_0+1}(0)\geq0$, then
$a_{j_0+1}(t)\ge 0$, and
\[
\frac{d}{dt}E_{j_{0}}(t):=\frac{d}{dt}\big(\sum_{j=0}^{j_{0}}a_{j}^{2}(t)\big)
=-2 \lambda^{j_0+1} a_{j_0}^2 a_{j_0+1}
\leq0
\]
for all  $t\geq0$. Unfortunately, this mechanism of forward energy transfer
 seems to break down once we have both $\alpha,\beta>0$. The proof in
\cite{MR2746670} still makes use of positivity but it appears to be different
from others; we will come back to their proof after Lemma \ref{lem:KPO}.

Next, let us consider the following system of equations, where there
is an extra ``dissipation'' term on the right hand side:

\begin{equation}
\frac{da_{j}}{dt}=\alpha\big(\lambda^{j}a_{j-1}^{2}-\l^{j+1}a_{j}a_{j+1}\big)+
\beta\big(\l^{j}a_{j-1}a_{j}-\l^{j+1}a_{j+1}^{2}\big)-\nu\l^{2\gamma j}a_{j},
\label{eq:NSE}
\end{equation}
again for $j\geq0$ and with $a_{-1}(t)\equiv0$. Here, $\alpha=1,\nu=1$
can be assumed with appropriate rescaling and $\gamma>0$ is a parameter
representing the intensity of the dissipation. In the special
case $\beta=0$, these equations are often called the dyadic Navier-Stokes
Equations (NSEs), and they are already studied quite extensively in
the literature. In particular, the following blow-up result has been
proved by Cheskidov in \cite{MR2415066}:
\begin{thm*}[\cite{MR2415066}]
 Consider equations (\ref{eq:NSE}) with $\alpha=1$, $\beta=0$,
$\nu=1$, and $\gamma<1/3$. For every $\delta>0$, there exists a
constant $M(\delta)$ such that if the initial data satisfy $a_{j}(0)\geq0$
for all $j\geq0$ and $||a(0)||_{\delta}>M(\delta)$, then $||a(t)||_{1/3+\delta}^{3}$
is not locally integrable on $[0,\infty)$.
\end{thm*}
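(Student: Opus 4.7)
The plan is to argue by contradiction: assume that $\|a(t)\|_{1/3+\delta}^{3}$ is locally integrable on $[0,\infty)$ and derive a conflict with the hypotheses of positivity and large $H^{\delta}$-norm of the initial data. A first preparatory step is to verify that non-negativity is forward-invariant for $\beta=0$: if $a_{j}(t_{0})=0$ while every other coordinate is non-negative, then $\dot a_{j}(t_{0})=\lambda^{j}a_{j-1}^{2}(t_{0})\geq 0$, so no coordinate can leave the non-negative cone. From this point on all $a_{j}(t)\geq 0$, and in particular the quadratic flux terms have definite sign.

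The core technical input is a cubic flux bound. Differentiating the partial energy $E_{\leq J}(t):=\sum_{j=0}^{J}a_{j}^{2}(t)$ yields
\[
\frac{d}{dt}E_{\leq J}(t)\;=\;-\Pi_{J}(t)\;-\;2\sum_{j=0}^{J}\lambda^{2\gamma j}a_{j}^{2}(t),\qquad \Pi_{J}(t):=2\lambda^{J+1}a_{J}^{2}(t)\,a_{J+1}(t),
\]
with $\Pi_{J}\geq 0$ by positivity. Using the pointwise control $a_{j}(t)\leq 2^{-(1/3+\delta)j}\|a(t)\|_{1/3+\delta}$, a direct computation gives
\[
\Pi_{J}(t)\;\leq\;C\,2^{-3\delta J}\,\|a(t)\|_{1/3+\delta}^{3}.
\]
Under the contradiction hypothesis, $\int_{0}^{T}\Pi_{J}(t)\,dt\to 0$ as $J\to\infty$ for every $T>0$. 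Passing to the limit in the partial energy identity then produces the sharp energy equality
\[
E(T)\;=\;E(0)\;-\;2\int_{0}^{T}D(s)\,ds,\qquad D(s):=\sum_{j\geq 0}\lambda^{2\gamma j}a_{j}^{2}(s),
\]
i.e.\ no anomalous energy dissipation is allowed.

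The third and decisive step is to contradict this anomaly-free identity using positivity together with a large enough $\|a(0)\|_{\delta}$. The heuristic is that $\gamma<1/3$ is supercritical with respect to the KP cascade: the inviscid block alone blows up the $H^{1/3+\delta}$ norm (first quoted theorem), and the dissipative correction $-\lambda^{2\gamma j}a_{j}$ has scaling exponent $2\gamma<2/3$, which is subcritical relative to the cubic nonlinearity whose shell-$j$ rate is of order $\lambda^{j}$. Concretely I would track the weighted norm $\|a\|_{\delta}^{2}$, whose balance law (after a standard telescoping)
\[
\frac{d}{dt}\|a(t)\|_{\delta}^{2}\;=\;2(1-2^{-2\delta})\sum_{j}2^{(2\delta+1)j}a_{j}a_{j-1}^{2}\;-\;2\sum_{j}2^{2(\gamma+\delta)j}a_{j}^{2}
\]
has a strictly positive nonlinear term when all $a_{j}\geq 0$; the aim is to show that if $\|a(0)\|_{\delta}$ exceeds some threshold $M(\delta)$, then the nonlinear production dominates the dissipative loss on a sufficiently long time window to drive a genuine cascade, i.e.\ $\limsup_{J\to\infty}\int_{0}^{T}\Pi_{J}\,dt>0$ for some finite $T$, in direct contradiction with the energy identity just derived.

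The main obstacle is precisely this last step: quantifying the cascade for the \emph{viscous} dyadic system in a regime where the Lyapunov functionals designed for the inviscid KP equations are contaminated by the $-\lambda^{2\gamma j}a_{j}$ terms. My plan is to adapt a Friedlander--Pavlovic style Lyapunov functional tailored to the Onsager-critical exponent $1/3$, threshold the low shells into the choice of $M(\delta)$ to absorb their dissipative losses, and exploit $\gamma<1/3$ through the inequality $\lambda^{-j}\ll\lambda^{-2\gamma j}$ at high $j$, so that the nonlinear transfer time beats the dissipative time on every sufficiently fine shell. Closing this comparison quantitatively, together with the flux identity of the second step, then gives the desired contradiction and hence the failure of local integrability of $\|a(t)\|_{1/3+\delta}^{3}$.
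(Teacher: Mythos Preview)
Your argument has a genuine gap at the step you yourself call ``decisive.'' Steps one and two are correct: positivity is preserved when $\beta=0$, the partial-energy flux identity holds, and the cubic bound $\Pi_J \lesssim 2^{-3\delta J}\|a\|_{1/3+\delta}^3$ shows that local integrability of $\|a\|_{1/3+\delta}^3$ forces the anomaly-free energy equality. But step three is only a plan, not a proof. You write down the $H^\delta$ balance law and note that the nonlinear term is non-negative, yet you never show it dominates the dissipative term, and you never actually build or analyze the Friedlander--Pavlovi\'c type functional you mention. The inequality $\frac{d}{dt}\|a\|_\delta^2 \ge -2\sum_j 2^{2(\gamma+\delta)j}a_j^2$ alone gives no lower bound on the flux $\Pi_J$, and nothing in what you wrote produces $\limsup_J \int_0^T \Pi_J\,dt > 0$. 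Without that, no contradiction is reached.

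The paper does not prove the quoted Cheskidov statement as such; it recovers a variant as Corollary~\ref{cor:D}, obtained from Theorem~\ref{thm:KPOD} with $\beta=0$, via a completely different and much shorter method. One sets $b_j=\lambda^j a_j$, chooses $w>1$ with $\lambda^{4\gamma}<w<\lambda^{4/3}$ and $w\le 2^{2s}\lambda^{-2}$, and considers the \emph{linear} weighted sum $S(t)=\sum_j b_j w^{-j}$. Elementary Cauchy--Schwarz manipulations yield a Riccati inequality
\[
\frac{d}{dt}S(t)\;\ge\;C_1\,S(t)^2-C_2,\qquad C_1>0,
\]
so if $S(0)$ is large enough then $S(t)$, and hence $A(t)=\sum_j b_j^2 w^{-j}\le \|a(t)\|_s^2$, fails to be locally integrable. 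No positivity hypothesis, no flux identity, and no cascade tracking are used; the initial-data threshold is $\sum_j 2^{j(\theta+1)}a_j(0)>C$ for a suitable $\theta$, not an $H^\delta$-norm bound. If you wish to close your own route you must actually carry out the Lyapunov comparison you sketch; the paper's argument bypasses all of it.
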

In particular, any solution blows up in finite time in $H^{1/3+\delta}$-norm
for every $\delta>0$. Our proof of the main theorem recovers this
$H^{1/3+\delta}$-norm blow-up in Corollary \ref{cor:D}.

\section{Results and Conjectures}

To begin, we borrow a lemma from \cite{MR2180809} from which
the model (\ref{eq:KPO}) follows naturally. We omit the proof since it is immediate in view of
the (formal) energy conservation constraint.
\begin{lem}
\label{lem:KPO}Assume that real-valued functions $a_{j}(t)$ ($j\geq0$)
satisfy a system of ODEs of the form
\[
\frac{da_{j}(t)}{dt}=F_{j}(a(t))
\]
where
\begin{itemize}
\item for each $j\geq0$, the map $F_{j}$ is a quadratic function of $a(t)$;
\item $F_{j}$ can involve only $a_{j-1}(t)$, $a_{j}(t)$, and $a_{j+1}(t)$;
\item each term for $F_{j}$ has a factor of $\l^{j}$ times a constant
which is independent of $j$, i.e.
\begin{align*}
F_j= \sum_{\substack{\mu_1=\pm 1, 0\\ \mu_2 =\pm 1,0}} C_{\mu_1,\mu_2} \lambda^j a_{j+\mu_1}(t) a_{j+\mu_2}(t),
\end{align*}
where $C_{\mu_1,\mu_2}$ are constants independent of $j$;

\item and the energy $\sum a_{j}^{2}(t)$ is (formally) conserved.
\end{itemize}
Then the system is necessarily of the form (\ref{eq:KPO}).
\end{lem}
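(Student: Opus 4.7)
The plan is to take the most general $F_j$ allowed by the first three bullets, namely
\[
F_j = \lambda^j\bigl(A\,a_{j-1}^{2} + B\,a_{j-1}a_{j} + C\,a_{j-1}a_{j+1} + D\,a_{j}^{2} + E\,a_{j}a_{j+1} + F\,a_{j+1}^{2}\bigr),
\]
with six constants $A,B,C,D,E,F$ that are independent of $j$ (these are just a renaming of the $C_{\mu_{1},\mu_{2}}$ in the statement), and then use the formal energy identity to impose four linear relations on these constants. The output should leave two free parameters that will be identified with $\alpha$ and $\beta$.

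First I would compute
\[
\tfrac{1}{2}\tfrac{d}{dt}\sum_{j\ge 0} a_{j}^{2} \;=\; \sum_{j\ge 0} a_{j} F_{j}(a),
\]
expand the right hand side as a formal sum of cubic monomials in $(a_{k})$, and then group monomials by the multiset of their indices. Each monomial involves at most three consecutive indices $k, k{+}1, k{+}2$, so after one index shift the admissible types are $a_{k}^{3}$, $a_{k}^{2}a_{k+1}$, $a_{k}a_{k+1}^{2}$, and $a_{k}a_{k+1}a_{k+2}$. Each type receives contributions from at most two of the six coefficients, and one reads off:
\[
\text{coeff of }a_{k}^{3}: \; D\lambda^{k},\qquad
\text{coeff of }a_{k}a_{k+1}a_{k+2}: \; C\lambda^{k+1},
\]
\[
\text{coeff of }a_{k}^{2}a_{k+1}: \; A\lambda^{k+1} + E\lambda^{k},\qquad
\text{coeff of }a_{k}a_{k+1}^{2}: \; B\lambda^{k+1} + F\lambda^{k}.
\]
Formal energy conservation forces each of these to vanish for every $k\ge 0$, giving $D=0$, $C=0$, $E=-\lambda A$, $F=-\lambda B$.

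Substituting back yields
\[
F_{j} = A\bigl(\lambda^{j}a_{j-1}^{2}-\lambda^{j+1}a_{j}a_{j+1}\bigr) + B\bigl(\lambda^{j}a_{j-1}a_{j}-\lambda^{j+1}a_{j+1}^{2}\bigr),
\]
which is exactly \eqref{eq:KPO} upon setting $\alpha=A$, $\beta=B$. There is essentially no analytic obstacle here; the only place requiring care is the bookkeeping when collecting monomials of the same index type, where the shift $j\mapsto j-1$ must be applied consistently. The boundary condition $a_{-1}\equiv 0$ causes no trouble because the identification of coefficients is done monomial by monomial in the formal series, and any would-be term at $j=0$ involving $a_{-1}$ vanishes automatically.
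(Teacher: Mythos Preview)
Your proposal is correct and is exactly the computation the paper has in mind: the authors omit the proof entirely, stating only that it is ``immediate in view of the (formal) energy conservation constraint,'' and your argument simply fills in this routine monomial-by-monomial bookkeeping.
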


The Euler equations are, of course, energy conserving (for smooth
solutions) and have quadratic nonlinearity. The factor $\l^{j}$ was
inserted so that the dyadic model would share similar functional
estimates with the Euler equations. One can argue that to model 3D
Euler equations, the choice $\l=2^{5/2}$ is
appropriate\footnote{Roughly speaking, this is based on the estimate
that (here $P_{2^j}$ is the usual Littlewood-Paley projector adapted
to the frequency block $|\xi|\ \sim 2^{j}$ and one can think of $u$
as the velocity in Euler) $\| P_{2^j} u \cdot \nabla P_{2^j} u
\|_{L^{2}(\mathbb R^d)} \lesssim 2^{(\frac d2 +1)j} \| P_{2^j} u
\|_{L^2(\mathbb R^d)}^2.$ For $d=3$, the factor is $2^{5/2}$. }
 (see \cite{MR2038114}). Lastly, the fact that $F_{j}$
only consists of $a_{j-1}$ and $a_{j+1}$ certainly does not hold
in the case of the Euler equations, but certain ``locality of interactions''
assumptions are believed to hold in the theory of turbulence. For example
one can see \cite{MR1877599, 2005PhyD..207...91E}.

Let us remark on the property of energy conservation. A formal calculation yields
that
\begin{align*}
\frac{d}{dt}E(t) &=2\sum_{j\geq0}a_{j}(\lambda^{j}a_{j-1}^{2}-\l^{j+1}a_{j}a_{j+1})\\
&=2\big(\sum_{j\geq1}\l^{j}a_{j-1}^{2}a_{j})-2(\sum_{j\geq0}\l^{j+1}a_{j}^{2}a_{j+1})=0.
\end{align*}
But in the above computation, an interchange of the order of
summation and differentiation must be justified, and it is
sufficient to require that $||a(t)||_{s}<\infty$ for $s>s_{cr}=1/3$.
However, for solutions with less regularity, this computation is no
longer valid and the dissipation of energy can indeed
occur\footnote{This is in some sense connected to the Onsager's
conjecture.}. In \cite{MR2746670} it was established that for every
initial condition with nonnegative components, the energy dissipates
to zero as $t\rightarrow\infty$. In particular, it implies
finite-time blow-up in every $H^{s}$ norm for $s>1/3$.

We are ready to state our main result.
\begin{thm}[The full model with diffusion]
\label{thm:KPOD}Consider the equations (\ref{eq:NSE}) with parameters
$\l=2$, $\alpha=1$, $\nu=1$, and $\beta\geq0$. For every $s>1/3$
and $\gamma<1/3$, there exists a value $\beta_{s,\gamma}>0$ such that
for $\beta\in[0,\beta_{s,\gamma})$, there exists a class of initial data for which the
corresponding solutions blow up in finite time in $H^{s}$-norm.
More precisely, for each solution $a(t)$,
 $\|a(t)\|_s^2$ is not locally integrable on $[0,\infty)$.
\end{thm}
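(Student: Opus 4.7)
My plan is to argue by contradiction. Suppose $a(t)$ satisfies (\ref{eq:NSE}) with $\|a(t)\|_s^2$ locally integrable on $[0,\infty)$. Since $s>1/3=s_{cr}$, the cubic nonlinear flux is absolutely summable and the formal calculation carried out in the excerpt for $dE/dt$ can be justified rigorously, giving the Onsager-type dissipation identity
\begin{equation*}
E(t)+2\nu\int_0^t\sum_{j\ge 0}\lambda^{2\gamma j}a_j^2(\tau)\,d\tau=E(0),\qquad t\ge 0.
\end{equation*}
In particular $E$ is non-increasing and the weighted dissipation integral is globally bounded by $E(0)/(2\nu)$. The goal of the proof is then to construct initial data whose dynamics force the dissipation integral to exceed this bound, producing the desired contradiction.

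For the construction I would use a concrete family of nonnegative initial data, for instance $a_j(0)=A\lambda^{-\sigma j}$ with $\sigma\in(\gamma,1/3)$ and $A$ large (depending on $s$ and $\gamma$), so that the initial $H^s$-norm is finite while the initial dissipation density $\sum_j\lambda^{2\gamma j}a_j^2(0)$ is already comparable to $A^2$. Because $\gamma<1/3$ is subcritical relative to the nonlinear cascade, the forward $\alpha$-flux should sustain a lower bound $\sum_j\lambda^{2\gamma j}a_j^2(t)\gtrsim A^2$ on a long time interval. I would make this precise by tracking an auxiliary monotone functional of the solution, for example a weighted partial energy $\sum_{j\le J(t)}a_j^2$ with the shell index $J(t)$ advancing in time, or a linear moment $\sum_j c_j a_j$ chosen so that its time derivative picks out the forward flux. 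The monotonicity of this functional, combined with the positivity of $a_j(t)$ preserved in the pure KP case $\beta=0$, would give a pointwise-in-time lower bound on the dissipation density that, integrated against a suitably long interval, exceeds $E(0)/(2\nu)$.

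The main obstacle is the regime $\beta>0$, where positivity is genuinely lost: at a point where $a_j=0$ one has $\dot a_j=\alpha\lambda^j a_{j-1}^2-\beta\lambda^{j+1}a_{j+1}^2$, which has indefinite sign. The plan is perturbative: for $\beta\in[0,\beta_{s,\gamma})$ with $\beta_{s,\gamma}$ small, a Gronwall-type comparison along the special initial data should show that the $\beta$-induced backward flux is dominated by the $\alpha$-cascade, so the auxiliary functional from the previous paragraph still grows as required. The smallness threshold $\beta_{s,\gamma}$ is dictated by the two margins $s-1/3>0$ and $1/3-\gamma>0$, which control, respectively, the Onsager justification of the energy identity and the cascade-versus-dissipation comparison; tracking the constants in these two estimates produces an explicit value of $\beta_{s,\gamma}$. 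The corollaries advertised in the introduction then follow by specializing this argument: setting $\nu=0$ yields the inviscid KPO blow-up of Corollary \ref{cor:KPO}, while setting $\beta=0$ recovers the Cheskidov $H^{1/3+\delta}$ blow-up of Corollary \ref{cor:D}.
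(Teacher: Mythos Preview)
Your plan has a real gap at the step where you need a pointwise-in-time lower bound on the dissipation density $\sum_j \lambda^{2\gamma j}a_j^2(t)$. The contradiction you aim for requires this quantity to stay $\gtrsim A^2$ on a time interval of length $O(1)$, so that its integral exceeds $E(0)/(2\nu)\sim A^2$. But the nonlinear time scale for data of size $A$ is $O(1/A)$, so there is no a priori reason the configuration persists that long. One is then forced to argue that the cascade only \emph{increases} the weighted dissipation, and this is exactly where positivity and the ``no backward flux'' mechanism enter --- the mechanism you correctly identify as broken once $\beta>0$. Invoking a ``Gronwall-type comparison'' does not by itself repair this: you would be comparing two infinite nonlinear systems, and without sign control on the $a_j$ there is no monotone quantity to feed into Gronwall. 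The paper in fact stresses that cascade-tracking arguments of this type are precisely what fail to extend from KP to the mixed model.

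The paper's proof bypasses the cascade analysis entirely. It takes the linear moment you mention only in passing, namely $L(t)=\sum_{j}\lambda^{j}a_j(t)\,w^{-j}$ for a parameter $w>1$, and computes $dL/dt$ directly from the equations. After reindexing the sums and applying Cauchy--Schwarz termwise one obtains
\[
\frac{dL}{dt}\;\ge\;\bigl(\lambda^{2}w^{-1}-w^{1/2}-\beta\lambda w^{-1/2}-\beta\lambda^{-1}w-\eta\,c(\gamma,w)\bigr)\, A(t)\;-\;C_2,
\]
where $A(t)=\sum_j\lambda^{2j}a_j^2\,w^{-j}\le\|a(t)\|_s^2$ whenever $w^{-1}\le 2^{2s-2}$. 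Since $L^2\lesssim A(t)$ by Cauchy--Schwarz again, this is a Riccati inequality $\dot L\ge C_1 L^2-C_2$. The conditions $s>1/3$ and $\gamma<1/3$ (together with $w^{-1}<\lambda^{-4\gamma}$) are exactly what allow $C_1>0$ for some admissible $w$ when $\beta=0$, and $\beta_{s,\gamma}$ is simply the slack in that inequality. No positivity of $a_j$, no tracking of the cascade; the $\beta$-terms are absorbed by the same Cauchy--Schwarz estimates as the $\alpha$-terms. If you want to salvage your outline, commit to the linear moment and drop the dissipation-integral framing altogether.
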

\begin{rem}
As will be clear from our proof, the initial data $a(0)=(a_j(0))_{j=0}^{\infty}$ can even taken
to be compactly supported, in the sense that for some integer $j_0>0$, $a_j(0)=0$ for all $j\ge j_0$.
\end{rem}

\begin{rem}
Instead of considering only nearest neighborhood interactions (i.e $a_{j}$, $a_{j-1}$, $a_{j+1}$),
one can generalize the full model \eqref{eq:NSE} to arbitrarily finitely many (or even infinitely many with sufficiently fast
decay of interaction) neighborhood interactions. It is expected that our method of proof also
carriers over to this case.
\end{rem}

\begin{rem}
Although Theorem \ref{thm:KPOD} settles the blow-up of \eqref{eq:NSE} more or less satisfactorily,
the proof itself (albeit simple) gives little information
on the transfer of energy mechanism in the model. On the other hand, the previous proofs on the blow-up of KP
model \emph{do} respect
the details of the dynamics and give some insight of the cascade mechanism. In light of this,
it is still desirable to give a
more "dynamic" proof in this flavor. After all, one of the main reasons for
studying the dyadic models is to understand  energy cascade and even turbulence transport.
\end{rem}

Before the proof of Theorem \ref{thm:KPOD}, we state
two direct corollaries which simply correspond to cases $\nu=0$ and
$\beta=0$, respectively.

\begin{cor}[The full model with no diffusion]
\label{cor:KPO}Consider the equations (\ref{eq:KPO}) with parameters
$\l=2$, $\alpha=1$, and $\beta\geq0$, and fix $s>1/3$ together
with $(2^{s}-2^{1-2s})/(1+2^{1-3s})>\beta$. Then for every nonnegative
initial data (that is, $a_{j}(0)\geq0$ for all $j$), there is finite-time
blow up in $H^{s}$-norm.
\end{cor}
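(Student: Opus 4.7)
The plan is to treat Corollary \ref{cor:KPO} as the direct undamped analogue of Theorem \ref{thm:KPOD}, rerunning that proof with the dissipation set to zero and tracking the resulting explicit threshold. The structural observation is that in \eqref{eq:NSE} the dissipation contributes
\[
-2\nu \sum_{j\geq 0} 2^{(2s+2\gamma)j} a_j^2 \leq 0
\]
to $\frac{d}{dt}\|a(t)\|_s^2$, so removing it can only strengthen any lower bound for growth obtained in the proof of Theorem \ref{thm:KPOD}; moreover, since the $\gamma$-dependence of $\beta_{s,\gamma}$ is entirely tied to the dissipation strength, in the undamped setting the admissible range of $\beta$ should depend only on $s$.

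Concretely, I would first differentiate $\|a\|_s^2 = \sum 2^{2sj} a_j^2$ along \eqref{eq:KPO} with $\lambda = 2$, $\alpha = 1$. Using $a_{-1}\equiv 0$ and shifting indices gathers the four cubic contributions into
\[
\frac{d}{dt}\|a\|_s^2 = 4(2^{2s}-1) \sum_{j\geq 0} 2^{(2s+1)j}\, a_j a_{j+1}\bigl(a_j + \beta a_{j+1}\bigr),
\]
which is sign-definite on the nonnegative cone. However, positivity of $a_j(t)$ is \emph{not} preserved when $\beta > 0$ (a zero component whose neighbour is nonzero instantly acquires a negative time derivative $-\beta\cdot 2^{j+1} a_{j+1}^2$), so a direct positivity bootstrap is unavailable. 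The honest closure must therefore import the auxiliary monotone functional and the associated inequality chain built in the proof of Theorem \ref{thm:KPOD}. Tracking the $\beta$-coefficient through those estimates, where the mixed cubic terms $a_j a_{j+1}^2$ and $a_j^2 a_{j+1}$ are balanced via Young's inequality against an interpolation at the critical scaling $s_{cr} = 1/3$, yields the compatibility condition
\[
\beta\bigl(1 + 2^{1-3s}\bigr) < 2^s - 2^{1-2s},
\]
i.e.\ the stated bound $\beta < (2^s - 2^{1-2s})/(1+2^{1-3s})$.

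The main (essentially only) obstacle here is the algebraic bookkeeping needed to confirm that this precise threshold is what the proof of Theorem \ref{thm:KPOD} produces at $\nu = 0$ after optimizing the auxiliary parameters; a sign-check at $s = 1/3$ (where the bound collapses to $\beta < 0$, matching $s_{cr} = 1/3$) provides a reassuring consistency test. Once the threshold is verified, the non-local-integrability of $\|a(t)\|_s^2$ on $[0,\infty)$ given by the theorem immediately forces $\|a(t)\|_s$ to become infinite in finite time, yielding Corollary \ref{cor:KPO}. As a tidy alternative that sidesteps unwinding constants, one may pass to the limit $\nu \to 0^+$ in Theorem \ref{thm:KPOD} using standard uniform estimates on \eqref{eq:NSE} over bounded time intervals, which recovers the same qualitative conclusion with the same range of admissible $\beta$.
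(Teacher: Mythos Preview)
Your overall plan---rerun the proof of Theorem \ref{thm:KPOD} with $\nu=0$ and read off the $\beta$-threshold---is exactly what the paper does, and the paper's version is a one-liner: drop the second inequality in \eqref{w_ineq} (it came only from the dissipation term) and take $w^{-1}=2^{2(s-1)}$ in the remaining two; the third then becomes $2^{2s}-2^{1-s}>\beta(2^{s}+2^{1-2s})$, i.e.\ the stated bound.

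Two points where your write-up drifts from the actual mechanism. First, the auxiliary functional in Theorem \ref{thm:KPOD} is $\sum_j b_j w^{-j}$, \emph{linear} in $b_j=2^j a_j$; its time derivative contains quadratic, not cubic, terms, and these are handled by Cauchy--Schwarz, not Young. Your description of ``balancing mixed cubic terms $a_j a_{j+1}^2$ and $a_j^2 a_{j+1}$ via Young's inequality'' does not match what is done and suggests you may be conflating this with your abandoned $\frac{d}{dt}\|a\|_s^2$ attempt. Second, you gloss over why \emph{every} nonnegative nonzero initial datum blows up, whereas Theorem \ref{thm:KPOD} carries a size threshold: the point is that with $\nu=0$ the constant $C_2$ vanishes, the differential inequality becomes $\frac{d}{dt}X\ge C_1 X^2$, and nonnegativity of $a_j(0)$ simply ensures $X(0)=\sum b_j(0)w^{-j}>0$. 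The $\nu\to 0^+$ limiting argument you offer as an alternative would not, without further work, recover this ``no size condition'' feature.
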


\begin{rem}
Here and below (in Corollary \ref{cor:D}), the blow-up of $H^s$-norm is again
understood as that $\| a(t) \|_s^2$ is not locally integrable on $[0,\infty)$.

\end{rem}

For example, when $\beta<6/5$, every nonnegative initial data blow-up
in the $H^{1}$-norm.
\begin{cor}[KP with diffusion]
\label{cor:D} Consider the equations (\ref{eq:NSE}) with $\l=2$,
$\alpha=1$, $\beta=0$, $\nu=1$, and $\gamma<1/3$. For every $s>1/3$,
let $\theta=\theta({s,\gamma})$ be a constant such that
\begin{align*}
&-\frac 43 <\theta<2(s-1), \\
&-\frac 43 < \theta <-4\gamma.
\end{align*}
There exists a constant $C=C(s,\gamma,\theta)>0$, such that once
the initial data $(a_j(0))_{j=0}^{\infty} \in H^s$ satisfy
\[
\sum_{j=0}^{\infty}2^{ j(\theta+1)}a_{j}(0)>C,
\]
there is finite-time blow up in $H^{s}$-norm. \end{cor}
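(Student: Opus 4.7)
The plan is to deduce Corollary \ref{cor:D} by specializing the proof of Theorem \ref{thm:KPOD} to the case $\beta = 0$. Since $\beta_{s,\gamma} > 0$ by that theorem, the hypothesis $\beta = 0 \in [0,\beta_{s,\gamma})$ is automatic; what remains is to identify the explicit class of initial data, which is encoded by the scalar functional
\[
A(t) := \sum_{j \geq 0} 2^{j(\theta+1)} a_j(t).
\]
The mechanism is a Riccati blow-up of $A$, combined with a comparison that forces $\|a(t)\|_s^2$ to dominate a non-integrable singularity.

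Differentiating $A$ along (\ref{eq:NSE}) with $\beta=0$ and reindexing gives
\[
\frac{dA}{dt} = 2^{\theta+2} Y(t) - \sum_{j\geq 0} 2^{j(\theta+2)+1} a_j a_{j+1} - \sum_{j\geq 0} 2^{j(\theta+1+2\gamma)} a_j,
\]
where $Y(t) := \sum_{j \geq 0} 2^{j(\theta+2)} a_j^2(t)$. The weighted inequality $2 a_j a_{j+1} \leq 2^{-(\theta+2)/2} a_j^2 + 2^{(\theta+2)/2} a_{j+1}^2$, combined with the index shift that sends $\sum_j 2^{j(\theta+2)} a_{j+1}^2$ to $2^{-(\theta+2)} Y$, bounds the mixed term by $2^{1-(\theta+2)/2} Y$. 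Applying Cauchy--Schwarz to the dissipation term gives
\[
\sum_{j\geq 0} 2^{j(\theta+1+2\gamma)} a_j \leq \Bigl(\sum_{j\geq 0} 2^{j(\theta+4\gamma)}\Bigr)^{1/2} Y(t)^{1/2},
\]
whose prefactor is finite precisely when $\theta < -4\gamma$. The net estimate is $A'(t) \geq c_1 Y(t) - c_2 Y(t)^{1/2}$, where $c_1 = 2^{\theta+2} - 2^{1-(\theta+2)/2}$ is strictly positive exactly when $\theta > -4/3$.

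The functional $A$ itself is controlled by $Y$ through Cauchy--Schwarz: $A^2 \leq C_\theta Y$ with $C_\theta = \sum_j 2^{j\theta} = (1-2^\theta)^{-1}$, finite since $\theta < -4\gamma < 0$. Choosing the threshold $C$ in the hypothesis large enough that $A(0) > C$ forces $Y(0) \geq (2c_2/c_1)^2$, the estimate $A'(t) \geq (c_1/2) Y(t) \geq (c_1/(2C_\theta)) A(t)^2$ holds at $t=0$, and monotonicity of $A$ along the resulting Riccati flow propagates it forward, yielding $A(t) \gtrsim (T-t)^{-1}$ for some finite $T > 0$. To convert this into non-integrability of $\|a\|_s^2$, use the termwise comparison $2^{j(\theta+2)} \leq 2^{2sj}$, valid precisely when $\theta \leq 2(s-1)$, which gives $Y(t) \leq \|a(t)\|_s^2$. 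Chaining,
\[
\|a(t)\|_s^2 \geq Y(t) \geq \frac{A(t)^2}{C_\theta} \gtrsim (T-t)^{-2},
\]
and $(T-t)^{-2}$ is not locally integrable on $[0, T] \subset [0, \infty)$.

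The main delicate step is the choice of weight in the AM--GM bound on the transport term: the weight must be exactly $2^{(\theta+2)/2}$ so that the index-shifted $a_{j+1}^2$ sum reassembles into $Y$ and the residual coefficient $2^{\theta+2} - 2^{1-(\theta+2)/2}$ remains positive. This optimization is what pins down the lower bound $\theta > -4/3$. The other two restrictions on $\theta$ arise from elementary considerations, namely convergence of a geometric series for the dissipation and direct exponent comparison for the Sobolev bound, and the three conditions together cut out exactly the range of $\theta$ allowed in the statement.
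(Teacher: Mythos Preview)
Your proof is correct and follows essentially the same route as the paper: define the linear functional $\sum_j 2^{j(\theta+1)}a_j$ (which in the paper's variables is $\sum_j b_j w^{-j}$ with $w^{-1}=2^\theta$), bound its derivative below by a positive multiple of the quadratic functional $Y=\sum_j 2^{j(\theta+2)}a_j^2$ minus a lower-order term, compare $Y$ to the square of the linear functional via Cauchy--Schwarz, and obtain a Riccati blow-up. The only cosmetic differences are that the paper handles the dissipative $Y^{1/2}$ term by AM--GM ($Y^{1/2}\le \eta Y+1/(4\eta)$) to obtain an inequality of the form $A'\ge C_1A^2-C_2$, whereas you keep $Y^{1/2}$ and argue that once $Y$ exceeds a fixed threshold it stays there; and the paper's actual proof of the corollary is two lines long because it simply sets $\beta=0$ in the already-established inequalities \eqref{w_ineq} and reads off the constraints on $w^{-1}=2^\theta$.
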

\begin{proof}[Proof of  Theorem \ref{thm:KPOD}]
We fix some $s>1/3$ and assume towards contradiction that
$||a(t)||_{s}^2$ is locally integrable\footnote{This in turn would
imply that one can freely interchange summation and differentiation
in the argument below. Alternatively, one can recast the equations
into integral (in time) formulation and justify passing the limit
under the integral.} on $[0,\infty)$.
 By setting $b_{j}(t):=\l^{j}a_{j}(t)$, we simplify
the equation as follows:
\begin{equation}
\frac{db_{j}}{dt}=\big(\lambda^{2}b_{j-1}^{2}-b_{j}b_{j+1}\big)+\beta\big(\l b_{j-1}b_{j}-\l^{-1}b_{j+1}^{2}\big)-\l^{2\gamma j}b_{j}.\label{eq:NSE-1}
\end{equation}
Then we consider the sum
\[
A(t):=\sum_{j=0}^{\infty}b_{j}^{2}w^{-j}
\]
where $w>1$ is a constant to be optimized later. We observe that
if $w^{-1}\leq\l^{-2}\cdot2^{2s}$, then $A(t)$ is also integrable since $A(t)\leq||a(t)||_{s}^{2}$
for all $t \geq 0$. Then we consider the quantities
\[
\frac{d}{dt}(b_{j}w^{-j})=(\l^{2}b_{j-1}^{2}w^{-j}-b_{j}b_{j+1}w^{-j})+
\beta(\l b_{j-1}b_{j}w^{-j}-\l^{-1}b_{j+1}^{2}w^{-j})-\l^{2\gamma j}b_{j}w^{-j}
\]
and sum them over all $j\geq0$. By the Cauchy-Schwartz inequality, the infinite sum appearing on the right hand side
is bounded in absolute value by $\mathrm{const}\cdot (A(t)+ \sqrt{A(t)}) $, and therefore the sum can be rearranged
whenever $A(t)$ is finite.
 We therefore obtain:
\begin{eqnarray}
\frac{d}{dt}(\sum_{j=0}^{\infty}b_{j}w^{-j}) & = & \l^{2}\sum_{j=1}^{\infty}b_{j-1}^{2}w^{-j}-\sum_{j=0}^{\infty}b_{j}b_{j+1}w^{-j}\label{eq:blowup}\\
 & + & \beta\l\sum_{j=1}^{\infty}b_{j-1}b_{j}w^{-j}-\beta\l^{-1}\sum_{j=0}^{\infty}b_{j+1}^{2}w^{-j}-\sum_{j=0}^{\infty}\l^{2\gamma j}b_{j}w^{-j}.\nonumber
\end{eqnarray}
We note in advance that again by the Cauchy-Schwartz inequality,
\begin{equation}
(\sum_{j=0}^{\infty}b_{j}w^{-j})^{2}\leq(\sum_{j=0}^{\infty}w^{-j})(\sum_{j=0}^{\infty}b_{j}^{2}w^{-j})=\frac{A(t)}{1-w^{-1}}\label{eq:A}
\end{equation}
holds. Then first four terms on the right hand side of (\ref{eq:blowup})
can be estimated as follows:
\begin{eqnarray*}
 &  & \l^{2}w^{-1}\sum_{j=1}^{\infty}b_{j-1}^2 w^{-{j-1}}
 -w^{1/2}\sum_{j=0}^{\infty}(b_{j}w^{-\frac{j}{2}})(b_{j+1}w^{-\frac{j+1}{2}})\\
 &  & +\beta\l w^{-1/2}\sum_{j=1}^{\infty}(b_{j-1}w^{-\frac{j-1}{2}})(b_{j}w^{-j})
 -\beta\l^{-1}w\sum_{j=0}^{\infty}b_{j+1}^2 w^{-{(j+1)}}\\
 & \geq &  (\lambda^2 w^{-1} -w^{1/2}-\beta\l w^{-1/2}-\beta \lambda^{-1} w
 ) \cdot A(t).
\end{eqnarray*}
Regarding the last term, we have
\begin{eqnarray*}
-\sum_{j=0}^{\infty}\l^{2\gamma j}b_{j}w^{-j} & = & -\sum_{j=0}^{\infty}(b_{j}w^{-\frac{j}{2}})(\l^{2\gamma j}w^{-\frac{j}{2}})\geq-(\sum_{j=0}^{\infty}b_{j}^{2}w^{-j})^{\frac{1}{2}}(\sum_{j=0}^{\infty}\l^{4\gamma j}w^{-j})^{\frac{1}{2}}\\
 & = & -\frac 1 {1-\lambda^{4\gamma} w^{-1}} \cdot(\sum_{j=0}^{\infty}b_{j}^{2}w^{-j})^{\frac{1}{2}}\\
 & \geq & - \frac 1 {1-\lambda^{4\gamma} w^{-1}} (\eta A(t)+\frac{1}{4\eta}),
\end{eqnarray*}
where $\eta>0$ is a constant. Here we have assumed that $\l^{4\gamma}w^{-1}<1$.
Adding above two estimates together with (\ref{eq:A}), we conclude
for some $C_{2}>0,$
\[
\frac{d}{dt}(\sum_{j=0}^{\infty}b_{j}w^{-j})\geq C_{1}\cdot(\sum_{j=0}^{\infty}b_{j}w^{-j})^{2}-C_{2}.
\]
Therefore, if we have $C_{1}>0$ and $\sum b_{j}(0)w^{-j}>\sqrt{C_{2}/C_{1}}$, then
$\sum b_{j}w^{-j}$ will not be locally integrable on $[0,\infty)$, which is a contradiction
to the fact that $A(t)$ is locally integrable. To this end, we need
\[
C_{1}=(1-w^{-1}) \cdot\Bigl(\l^{2}w^{-1}-w^{1/2}-\beta\l w^{-1/2}-\beta\l^{-1}w-\frac {\eta} {1-\lambda^{4\gamma} w^{-1}}
\Bigr)>0.
\]
But at the expense of choosing $\eta$ sufficiently small, it is enough
to have
\begin{align*}
\l^{2}w^{-1}-w^{1/2}-\beta\l w^{-1/2}-\beta\l^{-1}w>0.
\end{align*}
Therefore, to conclude the proof, we need:
\begin{align}
w^{-1} & \le \l^{-2}2^{2s} \notag \\
w^{-1} & <  \l^{-4\gamma} \notag \\
\l^{2}w^{-1} - w^{1/2}&>\beta(\l w^{-1/2}+\l^{-1}w), \label{w_ineq}
\end{align}
where $\l=2$. Assuming for the moment that $\beta=0$, we have $\l>w^{3/4}$
from the last inequality which gives restrictions $s>1/3$, $\gamma<1/3$.
On the other hand, it is clear now that once we have $s>1/3,\gamma<1/3$,
we can choose $w$ in a way that for small $\beta>0$, all the above three
inequalities are satisfied.
\end{proof}

\begin{proof}[Proof of Corollary \ref{cor:KPO}]
Since $\nu=0$, the second inequality of \eqref{w_ineq} is not needed.
One can just choose $w^{-1} =2^{2(s-1)}$ and this gives
\begin{align*}
\frac {2^{s}-2^{1-2s}}{1+2^{1-3s}}>\beta.
\end{align*}
\end{proof}
\begin{proof}[Proof of Corollary \ref{cor:D}]
Since $\beta=0$, the conditions on $w^{-1}$ in \eqref{w_ineq} take the form
\begin{align*}
&2^{-\frac 43} <w^{-1} \le 2^{2(s-1)}, \\
&2^{-\frac 43}<w^{-1}<2^{-4\gamma}.
\end{align*}
Denoting $w^{-1}=2^{\theta}$ then yields the result.
\end{proof}

Let us close by presenting a few conjectures which would complement
or generalize regularity and blow-up results currently known. We first
explain the result of \cite{MR2746670}: recall that we have already
mentioned their dissipation of energy result. But they also proved
the existence of so-called ``self-similar solutions'', which are
natural analogues of the fixed point in the forced case. To be specific,
consider the forced KP equations:
\begin{align*}
\frac{d} {dt} a_{j}(t) & =  \lambda^{j}a_{j-1}^{2}(t)-\l^{j+1}a_{j}(t)a_{j+1}(t),\,\,\,(j\geq1)\\
\frac{d}{dt}a_{0}(t)  &= -\l a_{0}(t)a_{1}(t)+f_{0},
\end{align*}
where $f_{0}>0$ is a constant. Then it is immediate that there exists
a unique fixed point which have finite energy. This fixed point satisfies
$\bar{a}_{j}=\mathrm{const}\cdot\l^{-j/3}$ so it has finite $H^{s}$-norm
precisely for $s<1/3$. In \cite{MR2337019,MR2600714}, it was established
that this fixed point is the unique global attractor of the dynamics.

When there is no forcing, there does not exist nontrivial fixed points.
However, self-similar solutions are the correct analogues; we define
a solution self-similar if for every $j\geq0$, $a_{j+1}(t)/a_{j}(t)$
is constant in time. From this requirement, it is straightforward
to check that the solution must have the form
\[
a_{j}(t)=\frac{c_{j}}{t-t_{0}}
\]
for some constants $c_{j}$ and $t_{0}>0$ which satisfy the recurrence (for $\lambda=2$)
\[
c_{j}c_{j+1}=2^{-j}c_{j}+c_{j-1}^{2}/2
\]
for all $j\geq0$ with $c_{-1}=0$. The choice of $c_{0}>0$ uniquely
determines the whole sequence and the self-similar solution, modulo
the choice of $t_{0}>0$ which is independent. The hard part is to
show that there exists a value of $c_{0}>0$ (which turns out to be
unique) such that the self-similar solution $a(t)$ has finite energy.
Then it is not hard to see that the self-similar solution satisfies
$c_{j}\sim\mathrm{const}\cdot\l^{-j/3}$. Note this  power-law decay in
$j$ can already be noticed from our proof; the scale $a_{j}(t)\sim\l^{-j/3}$
roughly corresponds to the case where we have equalities in the Cauchy-Schwartz
inequalities used in the proof. Now it is very desirable to show that
the self-similar solutions are the global attractors of the unforced
dynamics. If we believe in the convergence towards self-similar ones,
it is natural to conjecture that in the KP equations, the $H^{s}$-norms
remain finite for all $s<1/3$. This finiteness of smaller Sobolev
norms are partially obtained in the works \cite{MR3057168,MR2844828}.
Also, one can revert all inequalities in our proof and try to get some a
priori estimates on the solution, which look similar to some regularity
results proved in \cite{MR3057168,MR2844828}. Finally, it is tempting
to believe that such self-similar solutions also exist for our equation,
at least when $\beta$ is small. One can write down the recurrence
relation as above but this relation is now more complicated.

\section*{Acknowledgements}
I. Jeong would like to thank his advisor Prof. Ya.G. Sinai for many
helpful discussions. I. Jeong was supported in part by a Samsung Fellowship. D. Li was supported in part by an Nserc
discovery grant.

\bibliographystyle{plain}
\bibliography{dyadic}

\end{document}